\newcommand{\Exp}{{\rm I\hspace{-0.8mm}E}}
\newcommand{\Prob}{{\rm I\hspace{-0.8mm}P}}
\newcommand{\iz}{{\rm \rlap Z\kern 2.2pt Z}}
  \def\R{{\mathbb R}}
  \def\td{\text{\rm d}}
\newcommand{\ind}{{1\hspace{-1mm}{\rm I}}}
\newtheorem{theorem}{Theorem}
\newtheorem{proposition}{Proposition}
\newtheorem{definition}{Definition}
\newtheorem{remark}{Remark}
\newtheorem{example}{Example}
\title{{\bf  The distribution of the supremum \\
for spectrally asymmetric \\ L\'evy processes}}
\author[1]{Zbigniew Michna\footnote{Corresponding author\\ Email: zbigniew.michna@ue.wroc.pl\\ Tel/fax: +48713680335}}
\author[2]{Zbigniew Palmowski}
\author[3]{Martijn Pistorius}
\affil[1]{Department of Mathematics and Cybernetics

Wroc{\l}aw University of Economics}
\affil[2]{Mathematical Institute

University of Wroc{\l}aw}
\affil[3]{Department of Mathematics

Imperial College London}
\date{}
\begin{document}

\baselineskip=17pt

\maketitle

\begin{abstract}
In this article we derive formulas for the probability 
$\Prob(\sup_{t\leq T} X(t)>u)$ $T>0$ and
$\Prob(\sup_{t<\infty} X(t)>u)$
where $X$ is a spectrally positive L\'evy process with infinite variation. The formulas are generalizations
of the well-known Tak\'acs formulas for stochastic processes with non-negative and interchangeable increments. Moreover, 
we find the joint distribution of $\inf_{t\leq T} Y(t)$ and $Y(T)$ where $Y$ is a spectrally negative L\'evy process.

\vspace{5mm}
{\it Keywords: L\'evy process, distribution of the supremum of a stochastic process, spectrally assymetric L\'evy process}
\newline
\vspace{2cm}
MSC(2010): Primary 60G51; Secondary 60G70.
\end{abstract}

\section{Introduction}
L\'evy processes arise in many areas of probability and play an important role among stochastic processes. The distribution of the supremum of a stochastic process especially of a L\'evy process
appears in many applications in finance, insurance, queueing systems and engineering.
In this article we will consider spectrally asymmetric L\'evy processes,
that is, L\'evy processes with L\'evy measure concentrated on $(0,\infty)$ or $(-\infty,0)$ (these have only positive jumps or only negative jumps, respectively). The problem to determine the distribution of supremum on finite and infinite time intervals has been investigated in many papers (see
Bernyk et al. \cite{be:da:pe:08}, Bertoin \cite{be:96}, Bertoin et al. \cite{be:do:ma:08}, Furrer \cite{fu:98},
Harrison \cite{ha:77}, Hubalek and Kuznetsov \cite{hu:ku:11}, Kuznetsov \cite{ku:10}, Simon \cite{si:11}, Tak\'acs \cite{ta:65}, Zolotarev \cite{zo:64} and many others).
In Zolotarev \cite{zo:64} the Laplace transform of the distribution of the supremum on the infinite time interval for spectrally positive L\'evy processes is given.
In the recent works of Bernyk et al. \cite{be:da:pe:08}, Hubalek and Kuznetsov \cite{hu:ku:11} and Kuznetsov \cite{ku:10} a series representation for the density function of the supremum on finite intervals for stable L\'evy processes  and a certain class of L\'evy processes is given. 
In this article we determine the distribution of the supremum on finite and infinite intervals for spectrally positive L\'evy processes  
which are generalizations of  the pretty formulas of Tak\'acs \cite{ta:65} proven for L\'evy processes with finite variation.

Let $X=\{X(t): t\geq 0\}$ be a spectrally positive L\'evy process with characteristic function of the form
$$
\Exp\exp\{iuX(t)\}=\exp\left\{t\left[iau+\int_0^\infty(e^{iux}-1-iux\ind(x< 1))\,Q(dx) \right]\right\},
$$
where $a\in \mathbb{R}$ and $Q$ is a L\'evy measure (thus, $X$ is a L\'evy process without Gaussian component). We will investigate spectrally positive L\'evy processes $X$ with paths of infinite variation on finite intervals, which corresponds to the case that $\int_0^1 x\,Q(dx)=\infty$. Some of the results we will phrase in terms of the spectrally negative L\'{e}vy process $Y=-X$.
We assume throughout the article that $u>0$.

The results of Tac\'acs for L\'evy processes with non-negative increments are the following (originally derived in a slightly more general setting, that is, for processes with non-negative interchangeable increments;
see Tak\'acs \cite{ta:65}).
\begin{theorem}\label{tac}
If $X$ is a L\'evy process with non-negative increments and $c>0$ then
\begin{eqnarray}\label{tac1}
\lefteqn{\Prob(\sup_{t\leq T} (X(t)-ct)>u)}\label{tacf}\\
&=&\Prob(X(T)-cT>u)+\int_0^T\Exp\left(\frac{X(T-s)}{T-s}-c\right)^-
{\rm d_s}\Prob(X(s)-cs\leq u)\,,\nonumber
\end{eqnarray}
where $x^-=-\min\{x,0\}$ and ${\rm d_s}\Prob(X(s)-cs\leq u)=\Prob(u\leq X(s)-cs\leq u+{\rm d}s)$.

If $0\leq \Exp X(1)<c$ then
\begin{equation}\label{tac2}
\Prob(\sup_{t<\infty} (X(t)-ct)>u)=(c-\Exp X(1))\int_0^\infty{\rm d_s}\Prob(X(s)-cs\leq u)\,.
\end{equation}

If $\Exp X(1)\geq c$ then $\Prob(\sup_{t<\infty} (X(t)-ct)>u)=1$.
\end{theorem}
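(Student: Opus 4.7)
The plan is to derive \refs{tacf} starting from the continuous-time ballot identity
$$\Prob\bigl(X(r)\leq cr\text{ for all } r\in(0,T]\,\bigm|\,X(T)=y\bigr)=\bigl(1-y/(cT)\bigr)^{+},\qquad y\geq 0,$$
which is valid for any L\'evy process with non-negative increments (more generally, for any process with exchangeable non-negative increments, explaining Tak\'acs's original level of generality). This is the continuous-time cycle lemma, provable by a rotation argument on sample paths or by discretizing time and passing to the limit in the classical discrete ballot theorem. Integrating the identity against the law of $X(T)$ yields the baseline formula
$$\Prob\bigl(\sup_{t\leq T}S(t)\leq 0\bigr)=\frac{1}{c}\,\Exp\bigl[(X(T)/T-c)^{-}\bigr],$$
with $S(t)=X(t)-ct$, which corresponds to the $u=0$ instance of \refs{tacf}.

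For general $u>0$ I would split
$$\Prob(M_T>u)=\Prob(S(T)>u)+\Prob(M_T>u,\,S(T)\leq u),$$
with $M_T=\sup_{t\leq T}S(t)$, and treat the second term via a last-passage decomposition. Let $\sigma=\sup\{t\leq T:S(t)\geq u\}$; on $\{M_T>u,S(T)\leq u\}$ one has $\sigma\in(0,T)$ and $S<u$ on $(\sigma,T]$. Invoking the time-reversal identity $(X(T)-X(T-r))_{r\in[0,T]}\stackrel{d}{=}(X(r))_{r\in[0,T]}$, the reversed fragment on $[\sigma,T]$ is distributed as an independent copy of $X$ run for time $T-\sigma$, and the stay-strictly-below-$u$ condition on $(\sigma,T]$ becomes exactly the event controlled by the ballot identity with horizon $T-\sigma$. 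Integrating out the overshoot supplies the kernel $\Exp[(X(T-\sigma)/(T-\sigma)-c)^{-}]$; pairing it with the density of $S(\sigma)$ at level $u$, which produces the differential $d_s\Prob(S(s)\leq u)$, reconstructs the integral in \refs{tacf}.

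Formula \refs{tac2} then follows by letting $T\to\infty$ in \refs{tacf}: under $\Exp X(1)<c$ the strong law of large numbers forces $S(T)\to-\infty$ a.s., so $\Prob(S(T)>u)\to 0$, while $X(T-s)/(T-s)\to\Exp X(1)$ a.s.\ gives $\Exp[(X(T-s)/(T-s)-c)^{-}]\to c-\Exp X(1)$ by dominated convergence (bounded by $c$). The case $\Exp X(1)\geq c$ is immediate: $S(T)/T\to\Exp X(1)-c\geq 0$ a.s.\ implies $\limsup_{t\to\infty}S(t)=+\infty$ a.s., so $\Prob(\sup_{t<\infty}S(t)>u)=1$ (in the critical case $\Exp X(1)=c$ one invokes Chung--Fuchs recurrence). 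The main obstacle is the path-decomposition step in the second paragraph: one must handle the possible jump of $X$ across level $u$ at the last-passage epoch $\sigma$, and verify that the joint law of $(\sigma,S(\sigma))$ factors correctly into the ballot kernel times the density-at-$u$ differential.
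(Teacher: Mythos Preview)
The paper does not actually prove this theorem: it is quoted as Tak\'acs's result, with the remark that ``the proof of Th.~\ref{tac} uses a generalization of the classical ballot theorem and an approximation argument (see Tak\'acs \cite{ta:65}).'' Your starting point, the continuous-time ballot identity, is exactly the first of those two ingredients. Where you diverge is in replacing the second ingredient---a discrete-time (or step-function) approximation in which the analogue of \refs{tacf} is a combinatorial identity, followed by a passage to the limit---by a direct continuous-time last-passage decomposition coupled with time reversal.

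The gap you flag at the end is genuine, and it is the heart of the matter. The last-passage time $\sigma$ is not a stopping time, and the assertion that the post-$\sigma$ fragment is ``an independent copy of $X$ run for time $T-\sigma$'' is not immediate: duality gives equality in law on all of $[0,T]$, not on a path-dependent sub-interval. One observation that helps is that, because $S(t)=X(t)-ct$ has only non-negative jumps, right-continuity forces $S(\sigma)=u$ exactly on $\{M_T>u,\,S(T)\leq u\}$, so there is in fact no overshoot at $\sigma$; moreover $T-\sigma$ then becomes a first-passage time for the time-reversed process, hence a stopping time in the reversed filtration, to which the strong Markov property does apply. Even granting this, you still have to identify the resulting joint law with the specific measure ${\rm d}_s\Prob(S(s)\leq u)$ multiplied by the ballot kernel, and that identification is the non-trivial step you have deferred. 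Tak\'acs's discretisation sidesteps all of this: in discrete time the decomposition is an elementary counting argument, and the passage to the limit is routine weak-convergence bookkeeping. Your route can be pushed through, but as written it is a sketch whose hardest step is still outstanding.
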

We provide a rigorous definition of the measure ${\rm d_s}\Prob(X(s)-cs\leq u)=\Prob(u\leq X(s)-cs\leq u+{\rm d}s)$.
\begin{definition}
For $v\in \Bbb R$ and a process $X$ the measure ${\rm d_s}\Prob(X(s)\leq v)$  on $[0,\infty)$ is defined by the following integral
$$
\int_a^b g(s)\, {\rm d_s}\Prob(X(s)\leq v)=\lim_{\delta({\cal P})\rightarrow 0}\sum_{k=1}^n g(s_{k-1})
(F(v+\Delta s_{k-1}, s_{k-1})-F(v,s_{k-1}))\,,
$$
where $g$ is a function defined on the interval $[a,b]$, $F(v, s)=\Prob(X(s)\leq v)$, ${\cal P}=\{s_k\}_{k=0}^n$ is a partition of the interval $[a,b]$,
$\Delta s_{k-1}=s_k-s_{k-1}$ and $\delta({\cal P})$ is the mesh of the partion ${\cal P}$.
\end{definition}

\begin{remark}\rm 
Note that if $X$ has one-dimensional distributions that are absolutely continuous with respect to Lebesgue measure then ${\rm d_s}\Prob(X(s)\leq u)=f(u,s)\,{\rm d}s$, where $f(u,s)$ is a density function of the random variable $X(s)$.
\end{remark}
Further, if $X(s)$ or $Y(s)$ has a density function, their density functions will be denoted by $f(v,s)$ and $f(-v,s)$, respectively.
Th.~\ref{tac} completely solves the problem of the supremum distribution for spectrally positive L\'evy processes with finite variation. The proof of Th.~\ref{tac} uses a generalization of the classical ballot theorem and an approximation argument (see Tak\'acs \cite{ta:65}).

Similarly in Tak\'acs \cite{ta:65} a formula for the process $ct-X(t)$ is derived (the process $ct-X(t)$ is a spectrally negative L\'evy process with finite variation).
\begin{theorem}\label{tacneg}
If $X$ is a L\'evy process with non-negative increments and $c>0$ then
\begin{eqnarray}
\Prob(\sup_{t\leq T} (ct-X(t))>u)&=&u\int_{u/c}^T
s^{-1}\,{\rm d_s}\Prob(X(s)-cs\leq -u)\nonumber\\
&=&u\int_{0}^T
s^{-1}\,{\rm d_s}\Prob(cs-X(s)\leq u)\,,\label{taca1}
\end{eqnarray}
where ${\rm d_s}\Prob(cs-X(s)\leq u)=\Prob(u\leq cs-X(s)\leq u+{\rm d}s)$.
\end{theorem}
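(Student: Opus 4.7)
The strategy is to recognise the event $\{\sup_{t\le T}(ct-X(t))>u\}$ as a first-passage event for the spectrally negative, finite-variation Lévy process $Z(t):=ct-X(t)$, and then apply Kendall's passage-time identity. Since $X$ is a subordinator and $c>0$, $Z$ has linear drift $c$ and only negative jumps. Setting $\tau_u:=\inf\{t\ge 0:Z(t)>u\}$, we have $\Prob(\sup_{t\le T}(ct-X(t))>u)=\Prob(\tau_u\le T)$. Because $Z$ has no positive jumps, it crosses every strictly positive level continuously (it creeps upward), so $Z(\tau_u)=u$ on $\{\tau_u<\infty\}$; moreover $Z(t)\le ct$ forces $\tau_u\ge u/c$, which accounts for the lower limit of integration.

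The Kendall identity for a spectrally negative Lévy process states that
\[
s\,\Prob(\tau_u\in ds)\,du \;=\; u\,\Prob(Z(s)\in du)\,ds\qquad\text{as measures on }(0,\infty)^2.
\]
When $Z(s)$ admits a density $f_Z(\cdot,s)$, this reads $\Prob(\tau_u\in ds)=(u/s)f_Z(u,s)\,ds$. In the notation of the excerpt, $f_Z(u,s)\,ds$ is precisely $d_s\Prob(cs-X(s)\le u)=d_s\Prob(X(s)-cs\le -u)$, so integrating $\Prob(\tau_u\in ds)$ over $[u/c,T]$ recovers both equalities in \refs{taca1}. The integrand vanishes automatically for $s<u/c$ because $Z(s)\le cs$, so extending the lower limit to $0$ leaves the value unchanged; this delivers the second form of \refs{taca1}.

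To justify the Kendall identity in this setting, I would favour a Laplace-transform argument. For $Z$ spectrally negative with Laplace exponent $\psi$ and right inverse $\Phi$, one has $\Exp e^{-q\tau_u}=e^{-u\Phi(q)}$. Taking the double Laplace transform of $(u/s)f_Z(u,s)$, i.e. integrating against $e^{-\lambda u-qs}\,du\,ds$, one obtains $1/(\lambda+\Phi(q))$, which is also the Laplace transform (in $u$) of $u\mapsto e^{-u\Phi(q)}$; uniqueness of Laplace transforms and a routine Fubini step close the argument. An alternative, closer to Tak\'acs's original reasoning, is to approximate $X$ by compound Poisson subordinators and apply the discrete cycle lemma, which gives $\Prob(X(t)<ct-u\text{ for all }t\in(0,s)\mid X(s)=cs-u)=u/(cs)$ pathwise, and then pass to the limit.

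The principal obstacle is the passage to the limit in the ballot approach when $X$ has infinite Lévy activity or when $X(s)$ carries atoms (e.g. in the compound Poisson case $Z(s)$ has an atom at $cs$): one must check that the Stieltjes integrand $d_s\Prob(X(s)-cs\le -u)$ is the weak limit of its finite-activity analogues, and that the creeping identity $Z(\tau_u)=u$ survives the approximation at the specific level $u$. The Laplace-transform route avoids these pathwise concerns entirely and, together with the Stieltjes definition of $d_s$ given in the excerpt, yields the claim in full generality.
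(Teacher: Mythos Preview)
Your approach is correct and matches the paper's treatment: the paper does not give its own proof of this theorem, but attributes it to Tak\'acs and then observes in the remark immediately following that (\ref{taca1}) is precisely Kendall's identity for the spectrally negative finite-variation process $ct-X(t)$, citing Kendall and Borovkov--Burq. Your proposal---recognise $\{\sup_{t\le T}(ct-X(t))>u\}=\{\tau_u\le T\}$, note $\tau_u\ge u/c$ from $Z(t)\le ct$, and invoke Kendall's identity---is exactly this route, and you go further by sketching two proofs of Kendall's identity itself (Laplace transforms, or the ballot/cycle-lemma approximation), which the paper does not do.
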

\begin{remark}\rm
The formula (\ref{taca1}) is the well-known identity of Kendall which is valid for any spectrally negative L\'evy process
not equal to a subordinator (see Kendall \cite{ke:57} or Borovkov and Burq \cite{bo:bu:01} and references therein).
Here we have Kendall's identity for a spectrally negative L\'evy process $ct-X(t)$  with finite variation.
\end{remark}
Let us recall Kendal's identity which will be used in the proof of our main result Theorem~\ref{mi}.
Let
\begin{equation}\label{stt}
S(z)=\inf\{t\geq 0: Y(t)>z\}\,,
\end{equation}
where $z\geq 0$.
\begin{theorem}
For any spectrally negative L\'evy process $Y$ that is not a subordinator and $t, z>0$, we have
the identity
$$
t\,\Prob(S(z)\in {\rm d}t)=z\,\Prob(Y(t)\in {\rm d}z)\,{\rm d}t\,.
$$
\end{theorem}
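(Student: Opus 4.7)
The plan is to verify the identity by showing both sides, viewed as measures on $(0,\infty)^2$, have the same double Laplace transform in $(q,\lambda)$, and then invoke uniqueness of the Laplace transform. The key ingredient is the classical first-passage identity
$$\Exp\left[e^{-qS(z)}\right] = e^{-\Phi(q)z}, \qquad q,z>0,$$
where $\Phi$ denotes the right-inverse of the Laplace exponent $\psi(\theta):=\log\Exp[e^{\theta Y(1)}]$ of $Y$. This identity is obtained by applying optional stopping to the exponential martingale $\exp(\theta Y(t)-t\psi(\theta))$ at $S(z)\wedge n$ with $\theta=\Phi(q)$, letting $n\to\infty$, and using the creeping property $Y(S(z))=z$ on $\{S(z)<\infty\}$; the latter holds because $Y$ has only negative jumps, so at any upward first-passage time $Y$ arrives at the target level continuously.

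Differentiating the first-passage identity in $q$ yields $\Exp[S(z)e^{-qS(z)}]=z\,\Phi'(q)\,e^{-\Phi(q)z}$, so the double Laplace transform of the left-hand side is
$$\int_0^\infty\!\!\int_0^\infty e^{-qt-\lambda z}\,t\,\Prob(S(z)\in dt)\,dz \;=\; \Phi'(q)\int_0^\infty z\,e^{-(\lambda+\Phi(q))z}\,dz \;=\; \frac{\Phi'(q)}{(\lambda+\Phi(q))^2}.$$
For the right-hand side, Fubini gives $\int_0^\infty z\,e^{-\lambda z}\,u^q(z)\,dz$, where $u^q$ is the density on $(0,\infty)$ of the $q$-potential measure $U^q(dz):=\int_0^\infty e^{-qt}\Prob(Y(t)\in dz)\,dt$. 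The problem therefore reduces to establishing the fluctuation-theoretic identity $u^q(z)=\Phi'(q)e^{-\Phi(q)z}$ for $z>0$, after which integration gives $\Phi'(q)/(\lambda+\Phi(q))^2$ and both transforms coincide.

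To prove this $u^q$ formula I would first apply the strong Markov property at $S(z)$ combined with creeping to obtain the convolution $\Prob(Y(t)>z)=\int_0^t\Prob(S(z)\in ds)\,\Prob(Y(t-s)>0)$ for $z>0$; Laplace-transforming in $t$ and differentiating in $z$ produces $u^q(z)=C(q)\,e^{-\Phi(q)z}$ with $C(q)=\Phi(q)\int_0^\infty e^{-qt}\Prob(Y(t)>0)\,dt$. The main obstacle is pinning down $C(q)=\Phi'(q)$. I would handle this via Wiener--Hopf factorization at an independent $\mathrm{Exp}(q)$-time $e_q$: for spectrally negative $Y$, $\bar Y(e_q)$ is $\mathrm{Exp}(\Phi(q))$-distributed and is independent of $\bar Y(e_q)-Y(e_q)\stackrel{d}{=}\underline Y(e_q)$, so $\Prob(Y(e_q)>0)=\Exp[e^{\Phi(q)\underline Y(e_q)}]$; evaluating the explicit Wiener--Hopf factor for $\underline Y(e_q)$ at $\theta=\Phi(q)$ via L'H\^opital's rule (using $\Phi'(q)=1/\psi'(\Phi(q))$) gives $\Prob(Y(e_q)>0)=q\Phi'(q)/\Phi(q)$, whence $C(q)=\Phi'(q)$. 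Alternatively, the same $u^q$ formula is an immediate consequence of the standard scale-function expression for the $q$-resolvent density of a spectrally negative L\'evy process.
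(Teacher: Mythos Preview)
The paper does not actually prove this theorem. Kendall's identity is merely \emph{recalled} as a known result, with references to Kendall~(1957) and Borovkov--Burq~(2001), and is then used as an input in the proof of the main finite-horizon formula. So there is no in-paper proof to compare against.

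Your proposal is mathematically sound and follows a standard analytic route. The double-Laplace-transform computation is correct, and the key step---identifying the $q$-potential density on $(0,\infty)$ as $u^q(z)=\Phi'(q)\,e^{-\Phi(q)z}$---is a well-known fluctuation identity; your Wiener--Hopf derivation of the constant $C(q)=\Phi'(q)$ via $\Prob(Y(e_q)>0)=\Exp[e^{\Phi(q)\underline Y(e_q)}]=q\Phi'(q)/\Phi(q)$ goes through as stated. Two small points you should make explicit in a full write-up: (i) both sides must be read as measures on $(0,\infty)^2$, the left-hand side carrying an implicit $\mathrm{d}z$, so that uniqueness of the bivariate Laplace transform applies; (ii) the absolute continuity of $U^q$ on $(0,\infty)$ is not an additional assumption but follows a posteriori, since your convolution argument shows $z\mapsto U^q((z,\infty))$ equals a smooth function of $z$.

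For context, the Borovkov--Burq argument the paper cites is quite different in spirit: it is a short pathwise proof based on a cyclic-rearrangement (ballot-type) symmetry of the L\'evy bridge, avoiding Laplace transforms and Wiener--Hopf theory altogether. Your approach trades that elegance for heavier machinery, but has the advantage of sitting naturally inside the scale-function and resolvent framework that one would use anyway when working with spectrally one-sided processes.
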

\begin{remark}\rm
Under the condition that the one-dimensional distributions of $Y$
are absolutely continuous with density functions $f(-v,s)$, the density of the random variable $S(z)$ is
given by
$$
\frac{\Prob(S(z)\in {\rm d}t)}{{\rm d}t}=\frac{z}{t}\,f(-z, t)\,.
$$
\end{remark}

\section{The infinite variation case}
We extend the results of Tac\'acs~\cite{ta:65} to the case of L\'evy processes with infinite variation.
\begin{theorem}\label{mi}
If the one-dimensional distributions of $X$ are absolutely continuous, then
\begin{eqnarray}\label{mi1}
\lefteqn{\Prob(\sup_{t\leq T} X(t)>u)}\nonumber\\
&=&\Prob(X(T)>u)+\int_0^T\frac{\Exp(X(T-s))^-}{T-s}\,
f(u,s)\,{\rm d}s\,,\label{mainf}
\end{eqnarray}
where $x^-=-\min\{x,0\}$ and $f(u,s)$ is a density function of $X(s)$.
\end{theorem}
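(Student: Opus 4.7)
The plan is to split $\Prob(\sup_{t\leq T} X(t) > u)$ into $\Prob(X(T) > u)$ plus $\Prob(\sup_{t\leq T} X(t) > u,\ X(T) \leq u)$, and to evaluate the second term by combining a time-reversal argument with the downward creeping property of spectrally positive L\'evy processes and Kendall's identity (stated just above the theorem).

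For the reversal step, I would introduce the dual process $\hat X(s) := X(T) - X((T-s)^-)$ for $s \in [0,T]$, which is a L\'evy process with the same law on $[0,T]$ as $X$. The pathwise identity $X(t) = X(T) - \hat X(T-t)$ (together with $\hat X(T) = X(T)$) yields $\sup_{t\leq T} X(t) = X(T) - \inf_{s\leq T} \hat X(s)$, so that
$$
\{\sup_{t\leq T} X(t) > u,\ X(T) \leq u\} = \{\inf_{s\leq T} \hat X(s) < \hat X(T) - u,\ \hat X(T) \leq u\}.
$$
Since this is a functional of $\hat X \stackrel{d}{=} X$, its probability equals $\Prob(\inf_{s\leq T} X(s) < X(T) - u,\ X(T) \leq u)$.

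Next I would condition jointly on $X(T) \in dw$ (for $w \leq u$) and on the first-passage time $\sigma_{w-u} := \inf\{t : X(t) \leq w-u\}$. Because $X$ has no negative jumps, it crosses the level $w-u < 0$ continuously, so $X(\sigma_{w-u}) = w-u$ almost surely on $\{\sigma_{w-u} < \infty\}$. By the strong Markov property, conditional on $\sigma_{w-u} = t$ the quantity $X(T) - (w-u)$ is independent of $\calF_{\sigma_{w-u}}$ and distributed as $X(T-t)$, so the conditional density of $X(T)$ at $w$ equals $f(u,T-t)$. Kendall's identity applied to $Y = -X$ with $z = u-w > 0$ gives $\Prob(\sigma_{w-u} \in dt) = \frac{u-w}{t} f(w-u,t)\,dt$, and combining these two facts yields
$$
\Prob\bigl(\inf_{s\leq T} X(s) < w-u,\ X(T) \in dw\bigr) = dw \int_0^T f(u,T-t)\,\frac{u-w}{t}\,f(w-u,t)\,dt.
$$

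Finally, I would integrate over $w \in (-\infty, u]$, substitute $y = u-w \geq 0$, swap the order of integration by Tonelli, use the identity $\int_0^\infty y\,f(-y,t)\,dy = \Exp X(t)^-$ (finite since $-X$ is spectrally negative, so all positive moments of its positive part exist), and change variables $s = T-t$, to obtain
$$
\Prob\bigl(\sup_{t\leq T} X(t) > u,\ X(T) \leq u\bigr) = \int_0^T \frac{\Exp X(T-s)^-}{T-s}\, f(u,s)\,ds,
$$
which together with $\Prob(X(T)>u)$ gives (\ref{mainf}). The main technical step is the creeping argument at $\sigma_{w-u}$: right-continuity gives $X(\sigma_{w-u}) \leq w-u$, while $X(\sigma_{w-u}-) \geq w-u$ by minimality combined with $\Delta X(\sigma_{w-u}) \geq 0$ yields the reverse inequality, hence equality. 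The assumed absolute continuity of the one-dimensional distributions of $X$ ensures that the strict and non-strict first-passage times agree almost surely, so that Kendall's identity (formulated there for strict crossing of $Y$) transfers to $\sigma_{w-u}$ without modification.
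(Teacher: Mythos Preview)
Your proof is correct and rests on the same core ingredients as the paper's---the creeping property of a spectrally one-sided process at its first passage time, the strong Markov property, and Kendall's identity---but the duality step is carried out differently. The paper works with the killed semigroups $P_t$ and $\widehat P_t$ of $Y$ and $-Y$ and invokes Hunt's switching identity $\int P_tf\,g\,\td x=\int f\,\widehat P_tg\,\td x$; expanding the right-hand side via strong Markov at $S(0)$ (where $Y(S(0))=0$ by absence of upward jumps) and substituting Kendall's identity yields directly the joint law
\[
\Prob_x\bigl(\inf_{s\leq t}Y(s)<0,\ Y(t)\in\td z\bigr)=\int_0^t\frac{z}{t-s}\,\Prob(Y(t-s)\in\td z)\,f(x,s)\,\td s,
\]
which after translation to $X$ and integration in $z$ gives \eqref{mainf}. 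You instead apply the pathwise time-reversal $\hat X(s)=X(T)-X((T-s)^-)$ to convert the problem about $(\sup X,X(T))$ into one about $(\inf X,X(T))$, and then decompose the latter at the downward first-passage time $\sigma_{w-u}$ (where $X(\sigma_{w-u})=w-u$ by absence of downward jumps), again plugging in Kendall's identity for the hitting density.

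The two arguments are genuinely close---Hunt's switching identity is an abstract formulation of time-reversal duality---so the difference is largely one of packaging. Your route is a bit more elementary in that it avoids the killed-semigroup framework and Hunt's theorem, working entirely with pathwise facts about L\'evy processes. The paper's route, on the other hand, produces the joint distribution of $(\inf_{t\leq T}Y(t),Y(T))$ for spectrally negative $Y$ (their Theorem~\ref{tacneginf}) as an immediate byproduct of the intermediate identity above, whereas your intermediate formula concerns $(\inf_{t\leq T}X(t),X(T))$ for spectrally positive $X$, which is a different (time-reversed) object. Your handling of the strict/non-strict first-passage discrepancy via absolute continuity is adequate; the paper sidesteps this by working at the level of measures throughout.
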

\begin{remark}\rm
Note that for spectrally positive L\'evy processes $\Exp (X(1))^-<\infty$ which gives that
$\Exp X(1)>-\infty$ (see e.g. Sato \cite{sa:99}, Theorem 26.8).
\end{remark}
\begin{proof}
The proof will be based on Kendall's identity, duality, strong Markov property and Hunt's switching identity. Let $P_t$ and $\widehat{P}_t$ be Markov semigroups of the processes $Y$ and $-Y=X$,
respectively, killed upon entering the negative half-line $(-\infty,0)$ (see e.g. Bertoin \cite{be:96}, Sections 0.1 and II.1). By $(\Prob_x,\, x\in \mathbb R)$ we denote the family of
measures conditioned on $\{Y(0)=x\}$ with $\Prob_0=\Prob$. Thus by Hunt's
switching identity (see e.g. Bertoin \cite{be:96}, Theorem II.1.5) it holds for
nonnegative measurable functions $f,g$ and every $t\geq 0$ that
\begin{equation}\label{eq:hunt}
\int_\R P_t f(x) g(x) \td x = \int_\R f(x) \widehat{P}_t g(x)\td x.
\end{equation}
Moreover the right hand side of (\ref{eq:hunt}) is as follows
\begin{eqnarray}
\lefteqn{\int_\R f(x) \widehat{P}_t g(x)\td x =
\int_\R f(x) \Exp_{-x}[g(-Y(t))\ind_{\{t< S(0)\}}]\td x}\nonumber\\
&=& \int_\R f(x)\Exp_{-x}[g(-Y(t))]\td x\label{shi}\\
 &&-\int_\R f(x)\td x\int_0^t \Exp_{-x}[g(-Y(t))| S(0)=s]\Prob_{-x}(S(0)\in \td s)\label{shiafter}\\
&=& \int_\R \Exp_{x}[f(Y(t))] g(x)\td x-\int_\R f(x)\td x\int_0^t \Exp[g(-Y(t-s))]\Prob(S(x)\in \td s)\nonumber
\end{eqnarray}
where in the last equality for the term (\ref{shi}) we use Bertoin \cite{be:96}, Proposition II.1.1,
and for the term  (\ref{shiafter}) we have $\Prob_{-x}(S(0)\in \td s)=\Prob(S(x)\in \td s)$ and
by strong Markov property and by the fact $Y(S(0))=0$ ($Y$ does not jump upwards) we get
$$
\Exp_{-x}[g(-Y(t))| S(0)=s]=\Exp[g(-Y(t-s))]
$$
for $s<t$. Taking $f$ disappearing on the negative half-line and substituting Kendall's identity into the last subtrahend we obtain
\begin{eqnarray*}
\lefteqn{\int_\R f(x)\td x\int_0^t \Exp[g(-Y(t-s))]\Prob(S(x)\in \td s)}\\
&=&\int_0^\infty f(x)\td x\int_0^t \Exp[g(-Y(t-s))]\,\frac{x}{s}\,f(-x,s)\td s\\
&=&\int_0^\infty f(x)\td x\int_\R g(z)\td z\int_0^t f(z,t-s)\,\frac{x}{s}\,f(-x,s)\td s\\
&=&\int_0^\infty f(x)\td x\int_\R g(z)\td z\int_0^t \frac{x}{t-s}\,f(z,s)f(-x,t-s)\td s\\
&=&\int_\R g(x)\td x\int_0^\infty f(z)\td z\int_0^t \frac{z}{t-s}\,f(x,s)f(-z,t-s)\td s\,,
\end{eqnarray*}
where in the last equality we swapped $x$ with $z$ and changed the order of integrals.
The left hand side of (\ref{eq:hunt}) is the following
$$
\int_\R P_t f(x) g(x) \td x
=\int_\R g(x)\td x\int_0^\infty f(z) \Prob_x(\inf_{s\leq t} Y(s)\geq 0, Y(t)\in \td z)\,.
$$
Now returning to (\ref{eq:hunt}) we get the following identity for measures
\begin{eqnarray*}
\lefteqn{\Prob_x(\inf_{s\leq t} Y(s)\geq 0, Y(t)\in \td z)\td x}\\
&=&
\Prob_x(Y(t)\in \td z)\td x-\td z \td x\int_0^t\frac{z}{t-s}f(x,s) f(-z, t-s)\td s\,,
\end{eqnarray*}
where $x, z>0$ which gives
\begin{equation}\label{more}
\Prob_x(\inf_{s\leq t} Y(s)< 0, Y(t)\in \td z)=
\int_0^t\frac{z}{t-s}\, \Prob(Y(t-s)\in \td z)f(x,s)\td s
\end{equation}
where $t, x ,z >0$. Thus we have
$$
\Prob(\sup_{s\leq t} X(s)>x,\, x-X(t)\in \td z)=
\int_0^t\frac{z}{t-s}\, \Prob(-X(t-s)\in \td z)f(x,s)\td s
$$
for $z>0$ and obviously
$$
\Prob(\sup_{s\leq t} X(s)>x, \,x-X(t)\in \td z)=
\Prob(x-X(t)\in \td z)
$$
for $z\leq0$. Integrating the last formula with respect to $z$
we get the thesis of the theorem.
\end{proof}

The formula (\ref{mi1}) was derived in Michna \cite{mi:11} for spectrally positve $\alpha$-stable L\'evy processes (see also Furrer \cite{fu:97} Prop. 2.7).

In fact by (\ref{more}) we proved a more general result which determines the joint distribution of 
$\inf_{t\leq T} Y(t)$ and $Y(T)$.
\begin{theorem}\label{tacneginf}
If $Y$ is a spectrally negative L\'evy process that is not a subordinator, then
$$
\Prob (\inf_{t\leq T} Y(t)< -x, \,Y(T)+x\in \td z)=
\int_0^T\frac{z}{T-s}\, \Prob(Y(T-s)\in \td z)p(-x,s)\td s,
$$
where $T, x ,z>0$ and here $p(x,s)$ is a density function of $Y(s)$.
\end{theorem}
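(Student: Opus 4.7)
The plan is to extract Theorem~\ref{tacneginf} directly from the intermediate identity~(\ref{more}) established inside the proof of Theorem~\ref{mi}, with no new probabilistic input. All the substantive work---Hunt's switching identity, duality, the strong Markov property applied at $S(0)$, and the substitution of Kendall's identity---has already been carried out there to produce
$$
\Prob_x\bigl(\inf_{s\leq t} Y(s)<0,\ Y(t)\in \td z\bigr)=\int_0^t\frac{z}{t-s}\,\Prob(Y(t-s)\in \td z)\,f(x,s)\,\td s,
$$
valid for all $t,x,z>0$. Theorem~\ref{tacneginf} is just the unintegrated-in-$z$ form of this identity, transported from $\Prob_x$ to $\Prob$ by the spatial homogeneity of $Y$.

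First I would invoke translation invariance: under $\Prob_x$ the process $Y$ has the law of $x+Y$ under $\Prob$, so
$$
\Prob_x\bigl(\inf_{s\leq t} Y(s)<0,\ Y(t)\in \td z\bigr)=\Prob\bigl(\inf_{s\leq t} Y(s)<-x,\ Y(t)+x\in \td z\bigr).
$$
Next I would translate the density factor. Since $f(\cdot,s)$ is the density of $X(s)=-Y(s)$ while $p(\cdot,s)$ denotes the density of $Y(s)$, one has $f(x,s)=p(-x,s)$ pointwise. Substituting these two observations into~(\ref{more}) and relabelling $t\mapsto T$ produces exactly the claimed formula.

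The only subtlety concerns the density assumption: the right-hand side of Theorem~\ref{tacneginf} is meaningful because~(\ref{more}) was derived under the hypothesis that the one-dimensional marginals of $X$ (equivalently of $Y$) are absolutely continuous, so $p(-x,s)$ exists. Note that the law of $Y(T-s)$ enters only as a measure in $\td z$, so no further regularity in $z$ is required. I do not foresee any genuine obstacle---the argument is essentially a bookkeeping step once~(\ref{more}) is in hand; all the analytic content has been discharged in the proof of Theorem~\ref{mi}.
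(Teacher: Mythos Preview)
Your proposal is correct and matches the paper's approach exactly: the paper does not give a separate proof of Theorem~\ref{tacneginf} but simply states that ``In fact by (\ref{more}) we proved a more general result,'' leaving the translation-invariance step and the identification $f(x,s)=p(-x,s)$ implicit. You have made these bookkeeping steps explicit, and your observation about the implicit absolute-continuity hypothesis is also accurate.
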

\begin{remark}\rm 
It follows from the proof of Theorem \ref{mi} that the formulas of Theorems~\ref{mi} and \ref{tacneginf}
are valid for $X$ and $Y$ having also a Gaussian component.
\end{remark}
Note that the formula (\ref{mainf}) can also be obtained directly
from Tak\'acs formula (\ref{tac1}) using an approximation
argument. To outline the argument, we introduce
$$
N_\epsilon(t)=\sum_{s\leq t}\Delta X(s)\ind(\Delta X(s)\geq \epsilon)\,.
$$
The process $N_\epsilon$ is a compound Poisson process with positive jumps.
\begin{proposition}\label{weak}
\begin{equation}\label{sn2}
N_\epsilon(t)-(\int_{\epsilon}^1x\,Q(dx)-a)t\rightarrow X(t)\,,
\end{equation}
as $\epsilon\downarrow 0$ a.s. in the uniform topology.
\end{proposition}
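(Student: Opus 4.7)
The plan is to recognise \refs{sn2} as (a rearrangement of) the L\'evy--It\^o decomposition of $X$. Since $X$ has no Gaussian component, for each $\epsilon\in(0,1]$ I can write
$$
X(t)=at+\Bigl[N_\epsilon(t)-t\int_\epsilon^1 x\,Q(dx)\Bigr]+M_t^\epsilon,
$$
where
$$
M_t^\epsilon:=\sum_{s\leq t}\Delta X(s)\,\ind(0<\Delta X(s)<\epsilon)-t\int_0^\epsilon x\,Q(dx)
$$
is the compensated sum of positive jumps of size smaller than $\epsilon$. This identifies the quantity $X(t)-\bigl[N_\epsilon(t)-(\int_\epsilon^1 x\,Q(dx)-a)t\bigr]$ with $M_t^\epsilon$, so \refs{sn2} reduces to the claim that, for every fixed $T>0$, $\sup_{t\leq T}|M_t^\epsilon|\to 0$ almost surely as $\epsilon\downarrow 0$.

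The key estimate is Doob's $L^2$ maximal inequality applied to the c\`adl\`ag square-integrable martingale $(M_t^\epsilon)_{t\geq 0}$, whose predictable variance equals $t\int_0^\epsilon x^2\,Q(dx)$:
$$
\Exp\Bigl[\sup_{t\leq T}(M_t^\epsilon)^2\Bigr]\leq 4T\int_0^\epsilon x^2\,Q(dx),
$$
which vanishes as $\epsilon\downarrow 0$ because $Q$ is a L\'evy measure. I would then pick a sequence $\epsilon_n\downarrow 0$ with $\sum_n\int_0^{\epsilon_n}x^2\,Q(dx)<\infty$ (e.g.\ chosen so that each integral is $\leq 2^{-n}$); Markov's inequality together with the Borel--Cantelli lemma yields $\sup_{t\leq T}|M_t^{\epsilon_n}|\to 0$ almost surely. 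To pass from this subsequence to the full limit $\epsilon\downarrow 0$, I would apply the same Doob estimate to the martingale increments $M_t^\epsilon-M_t^{\epsilon_{n+1}}$ on the parameter range $\epsilon\in(\epsilon_{n+1},\epsilon_n]$, whose variance is again controlled by $T\int_0^{\epsilon_n}x^2\,Q(dx)$, and invoke Borel--Cantelli one more time.

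The main obstacle is not conceptual but technical: upgrading the $L^2$-convergence of the compensated small-jumps martingale to uniform almost sure convergence along the continuous parameter $\epsilon$. This is the classical content of the L\'evy--It\^o construction and is handled precisely by the subsequence extraction plus interpolation estimate sketched above; once this is done, adding back the already pathwise-convergent large-jump sum $N_\epsilon(t)-t\int_\epsilon^1 x\,Q(dx)$ (which stabilises for $\epsilon$ small enough on any bounded time interval, since only finitely many jumps of size $\geq\epsilon$ occur in $[0,T]$) completes the proof.
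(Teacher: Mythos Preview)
Your proposal is correct and coincides with the paper's approach: the paper's entire proof is the single line ``The assertion follows from the proof of L\'evy--It\^o representation see e.g.\ Sato~\cite{sa:99},'' and what you have written is exactly a sketch of that construction (Doob's $L^2$ inequality on the compensated small-jump martingale, Borel--Cantelli along a subsequence, then interpolation). One minor notational point: in your displayed definition of $M_t^\epsilon$ the sum and the compensator are each infinite under the standing assumption $\int_0^1 x\,Q(dx)=\infty$, so $M_t^\epsilon$ should be introduced directly as the compensated Poisson integral (equivalently as $X(t)-at-N_\epsilon(t)+t\int_\epsilon^1 x\,Q(dx)$), which is in any case how the remainder of your argument treats it.
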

\begin{proof}
The assertion follows from the proof of L\'evy-It\^o representation see e.g. Sato \cite{sa:99}.
\end{proof}
Substituting $X(t)=N_\epsilon(t)$ and $c=\int_{\epsilon}^1x\,Q(dx)-a$ to (\ref{tac1}) and letting $\epsilon$ tend to zero
we arrive at the formula (\ref{mainf}). To turn the sketched argument into a rigorous proof still
requires a justification to take the limits under the integral.

We also note that taking $T$ to infinity in (\ref{mainf}) yields the following formula (the passage to the limit $T\rightarrow\infty$ also needs a justification) for which we provide a proof under assumptions which are rather easy to check.
\begin{theorem}\label{forinfh}
Let $X$ be a spectrally positive L\'evy process such that $\Exp X(1)<0$
and let a function $g$ esist such that
\begin{equation}\label{addass}
\frac{\Exp(X(t))^-}{t}\leq g(t),\qquad t\in (0,1)
\end{equation} 
and
\begin{equation}\label{addass2}
\int_0^1 g(t)\,{\rm d} t<\infty\,.
\end{equation}
Moreover, we assume
\begin{equation}\label{addass3}
\sup_{t\geq 1}\frac{\Exp(X(t))^-}{t}<\infty
\end{equation}
and
\begin{equation}\label{addass31}
\lim_{t\rightarrow\infty}\frac{\Exp(X(t))^-}{t}=|\Exp X(1)|
\end{equation}
and for every $u>0$
\begin{equation}\label{addass4}
\lim_{s\rightarrow\infty}f(u,s)=0\,.
\end{equation}
Then
\begin{equation}\label{mi2}
\Prob(\sup_{t<\infty} X(t)>u)=|\Exp X(1)|\int_0^\infty f(u,s)\td s\,.
\end{equation}
\end{theorem}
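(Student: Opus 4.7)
The approach is to pass to the limit $T\to\infty$ in the finite-horizon identity (\ref{mi1}) of Theorem~\ref{mi}. The left-hand side is monotone in $T$ and converges to $\Prob(\sup_{t<\infty}X(t)>u)$. Because $\Exp X(1)<0$ is finite and $\Exp(X(1))^-<\infty$ for any spectrally positive L\'evy process, both one-sided expectations of $X(1)$ are finite and the strong law of large numbers gives $X(T)\to-\infty$ a.s., so $\Prob(X(T)>u)\to 0$. The remaining task is to analyse
\begin{equation*}
I(T):=\int_0^T\frac{\Exp(X(T-s))^-}{T-s}\,f(u,s)\,\td s.
\end{equation*}

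I would first apply Fatou's lemma: using the pointwise limit $\Exp(X(T-s))^-/(T-s)\to|\Exp X(1)|$ from (\ref{addass31}), this gives $\liminf_T I(T)\geq|\Exp X(1)|\int_0^\infty f(u,s)\,\td s$. Combined with the trivial bound $I(T)\leq 1$, this establishes that $\int_0^\infty f(u,s)\,\td s<\infty$, which is the key quantitative input for the subsequent dominated-convergence arguments.

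Next I would split $I(T)=I_1(T)+I_2(T)$, with $I_1$ the integral over $[0,T-1]$ and $I_2$ the integral over $[T-1,T]$. On $I_1$, assumption (\ref{addass3}) provides the $T$-independent bound $\Exp(X(T-s))^-/(T-s)\leq M$, so the integrand is dominated by $Mf(u,s)$, and dominated convergence combined with (\ref{addass31}) yields $I_1(T)\to|\Exp X(1)|\int_0^\infty f(u,s)\,\td s$. On $I_2$, the substitution $r=T-s$ and the bound (\ref{addass}) give $I_2(T)\leq\int_0^1 g(r)\,f(u,T-r)\,\td r$; by (\ref{addass4}) we have $f(u,T-r)\to 0$ pointwise and $f(u,T-r)$ is uniformly bounded on $r\in[0,1]$ once $T$ is sufficiently large, so with $g$ as integrable majorant (from (\ref{addass2})) dominated convergence gives $I_2(T)\to 0$.

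The main technical obstacle is the boundary piece $I_2(T)$: the ratio $\Exp(X(T-s))^-/(T-s)$ may blow up as $T-s\downarrow 0$, so no single $T$-independent dominating function is available on the whole interval $[0,T]$. The local-integrability hypotheses (\ref{addass}) and (\ref{addass2}) are tailored precisely to control this behaviour near zero, while (\ref{addass4}) is what makes $f(u,T-r)$ small uniformly on $[0,1]$ for large $T$; without these, the passage to the limit cannot be justified by a direct dominated-convergence argument.
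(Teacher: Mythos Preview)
Your proof is correct and follows essentially the same strategy as the paper: Fatou's lemma for the lower bound, then a splitting of the integral with the piece near $s=T$ handled via the substitution $r=T-s$ and the majorant $g$. The only difference is cosmetic: you split into two pieces $[0,T-1]\cup[T-1,T]$ and apply dominated convergence on $[0,T-1]$ using the integrable majorant $Mf(u,\cdot)$ (available because you explicitly recorded $\int_0^\infty f(u,s)\,\td s<\infty$ from the Fatou step), whereas the paper splits into three pieces $[0,T_0]\cup[T_0,T-1]\cup[T-1,T]$ and takes a double limit $T\to\infty$ then $T_0\to\infty$; your version is slightly more streamlined but the substance is identical.
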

\begin{proof}
By the formula (\ref{mi1}) and the assumption (\ref{addass31})
and Fatou lemma we get
\begin{equation}\label{calsk}
|\Exp X(1)|\int_0^{\infty}f(u,s)\,{\rm d}s\leq \Prob(\sup_{t< \infty} X(t)>u)\,.
\end{equation}
Using (\ref{mi1}) again we can write for $1\leq T_0<T-1$
\begin{eqnarray*}
\lefteqn{\Prob(\sup_{t\leq T} X(t)>u)}\nonumber\\
&=&\Prob(X(T)>u)+\int_{0}^{T_0}\frac{\Exp(X(T-s))^-}{T-s}\,
f(u,s)\,{\rm d}s \\
&&+ \int_{T_0}^{T-1}\frac{\Exp(X(T-s))^-}{T-s}\,
f(u,s)\,{\rm d}s  +\int_{T-1}^{T}\frac{\Exp(X(T-s))^-}{T-s}\,
f(u,s)\,{\rm d}s\,. 
\end{eqnarray*}
Thus if $T\rightarrow\infty$ we obtain
\begin{eqnarray}
\lefteqn{\Prob(\sup_{t<\infty} X(t)>u)}\nonumber\\
&\leq & |\Exp X(1)| \int_0^{T_0}
f(u,s)\,{\rm d}s\label{upper}\\
&&+
\left(\sup_{t\geq 1} \frac{\Exp(X(t))^-}{t}\right)\,\int_{T_0}^\infty f(u,s)\,{\rm d}s\nonumber
\\&&+
\lim_{T\to \infty}
\int_{T-1}^{T}\frac{\Exp(X(T-s))^-}{T-s}\,
f(u,s)\,{\rm d}s\,.\nonumber
\end{eqnarray}
Applying the assumptions  (\ref{addass}) and  (\ref{addass2})  and (\ref{addass4}) we obtain
$$\lim_{T\to \infty}
\int_{T-1}^{T}\frac{\Exp(X(T-s))^-}{T-s}\,
f(u,s)\,{\rm d}s
=0$$
because 
\begin{eqnarray*}
\int_{T-1}^{T}\frac{\Exp(X(T-s))^-}{T-s}\,
f(u,s)\,{\rm d}s&=& \int_0^1 \frac{\Exp(X(s))^-}{s}\,
f(u,T-s)\,{\rm d}s\\
&\leq&\int_0^1 g(s)\,
f(u,T-s)\,{\rm d}s\\
&\leq & \sup_{T-1<s<T} f(u,s)\int_0^1 g(s)\,{\rm d}s\,.
\end{eqnarray*}
Hence taking $T_0\rightarrow\infty$ in (\ref{upper}) we obtain
$$
\Prob(\sup_{t<\infty} X(t)>u)
\leq |\Exp X(1)| \int_0^{\infty}
f(u,s)\,{\rm d}s
$$
which together with (\ref{calsk}) completes the proof.
\end{proof}
In the case $\Exp X(1)\geq 0$ it is easy to show that $\Prob(\sup_{t<\infty} X(t)>u)=1$
(use the law of large numbers and Chung and Fuchs \cite{ch:fu:57} in the case $\Exp X(1)= 0$).

\begin{example}\rm
Let us consider the spectrally positive $\alpha$-stable L\'evy process $Z_\alpha$ with $1<\alpha\leq 2$ (that is the skewness parameter $\beta=1$ and the shift parameter $\mu=0$, see e.g. Samorodnitsky and Taqqu \cite{sa:ta:94}).
We will investigate the process $X(t)=Z_\alpha(t)-ct$ with $c>0$ which is a spectrally positive L\'evy process with $\Exp X(t)=-ct$. Note that
\begin{eqnarray}
\frac{\Exp(X(t))^-}{t}&=&\Exp\left(\frac{Z_\alpha(t)}{t}-c\right)^-\label{stable}\\
&=&\Exp\left(t^{1/\alpha-1}Z_\alpha(1)-c\right)^-\nonumber\\
&\leq& t^{1/\alpha-1} \Exp\left(Z_\alpha(1)-c\right)^-\nonumber\\
&=& g(t)\nonumber
\end{eqnarray}
where in the second line we used the self-similarity of $Z_\alpha$ and the last inequality is valid for $0<t\leq 1$ providing the function $g$ which satisfies the assumptions (\ref{addass}) and (\ref{addass2}). Since
\begin{equation}\label{stable1}
\left(t^{1/\alpha-1}Z_\alpha(1)-c\right)^-\leq |Z_\alpha(1)|+c
\end{equation}
for $t\geq 1$ and the right hand side of the last inequality is integrable we get
$$
\lim_{t\rightarrow\infty}\frac{\Exp(X(t))^-}{t}=\lim_{t\rightarrow\infty}\Exp\left(t^{1/\alpha-1}Z_\alpha(1)-c\right)^-=|\Exp X(1)|
$$
by Lebesgue dominated convergence theorem. By (\ref{stable}) and (\ref{stable1}) the assumption (\ref{addass3}) is satisfied. Moreover if $f(x)$ is the density function
of $Z_\alpha(1)$ then the density function of $Z_\alpha(s)-cs$ is 
$$
f(u,s)=s^{-1/\alpha}f(s^{-1/\alpha}(u+cs))
$$
and it is clear that $\lim_{s\rightarrow\infty}s^{-1/\alpha}f(s^{-1/\alpha}(u+cs))=0$. 
Thus using  (\ref{mi2}) of Th.~\ref{forinfh} we get
\begin{equation}\label{mi2st}
\Prob(\sup_{t<\infty} (Z_\alpha(t)-ct)>u)=c\int_0^\infty s^{-1/\alpha} f(s^{-1/\alpha}(u+cs))\,{\rm d}s\,.
\end{equation}
Applying a certain form of the density $f(x)$ for $1<\alpha<2$ for the parametrization as in Samorodnitsky and Taqqu \cite{sa:ta:94} (see e.g. Nolan \cite{no:97} and references therein) and the scale parameter $\sigma=1$ (that is
$Z_\alpha(1)$ has the scale parameter $\sigma=1$) we obtain
\begin{eqnarray*}
\lefteqn{\Prob(\sup_{t<\infty} (Z_\alpha(t)-ct)>u)}\\
&=&\frac{c}{\pi}\int_0^\infty{\rm d}s\,s^{-1/\alpha}
\int_0^\infty e^{-t^\alpha}\cos\left(ts^{-1/\alpha}(u+cs)-t^\alpha\tan{\frac{\pi\alpha}{2}}\right){\rm d}t\\
&=&\sum_{n=0}^\infty \frac{(-a)^n}{\Gamma(1+(\alpha-1)n)}\,u^{(\alpha-1)n}\,,
\end{eqnarray*}
where $a=c\cos(\pi(\alpha-2)/2)$ and
the last equality follows by comparing with the result of Furrer \cite{fu:98}  (the last expression is the Mittag-Leffler function).

For the standard Wiener process $W(t)$ similarly as above we get the following identity
$$
\Prob(\sup_{t<\infty} (W(t)-ct)>u)
=\frac{c}{\sqrt{2\pi}}\int_0^\infty s^{-1/2}\exp\left(-\frac{(u+cs)^2}{2s}\right){\rm d}s=\exp(-2uc)\,,
$$
where the last equality is the well-known result for the supremum distribution of Wiener process over the infinite time horizon (see e.g. Asmussen and Albrecher \cite{as:al:10}).

Similarly one can consider the distribution of the supremum on finite intervals. By the formula (\ref{mi1}) of Th.~\ref{mi} we derive (for simplicity we put $c=0$, for $c\neq 0$ the formula will be a little more complicated)
\begin{eqnarray*}
\lefteqn{\Prob(\sup_{t\leq T} Z_\alpha(t)>u)}\\
&=&\Prob(Z_\alpha(T)>u)\\
&&+\frac{\Exp (Z(1))^-}{\pi}\int_0^T{\rm d}s\,(T-s)^{1/\alpha-1}\,s^{-1/\alpha}
\int_0^\infty e^{-t^\alpha}\cos\left(ts^{-1/\alpha}u-t^\alpha\tan{\frac{\pi\alpha}{2}}\right){\rm d}t\,,
\end{eqnarray*}
where
$$
\Prob(Z_\alpha(T)>u)=\frac{T^{-1/\alpha}}{\pi}\int_u^{\infty}{\rm d}x\int_0^\infty e^{-t^\alpha}\cos\left(tT^{-1/\alpha}x-t^\alpha\tan{\frac{\pi\alpha}{2}}\right){\rm d}t
$$
and
$$
\Exp(Z(1))^-=\frac{1}{\pi}\int_{-\infty}^{0}{\rm d}x\,x\int_0^\infty e^{-t^\alpha}\cos\left(tx-t^\alpha\tan{\frac{\pi\alpha}{2}}\right){\rm d}t\,;
$$
compare the formula with Bernyk et al. \cite{be:da:pe:08} where they get a series representation for the density function of the supremum distribution, see also Bertoin et al. \cite{be:do:ma:08} and
Hubalek and Kuznetsov \cite{hu:ku:11}.
\end{example}

\begin{example}\rm
Assume that $X(t)$ is a compound Poisson process with negative drift $ct$ and nonnegative jumps perturbed by a spectrally positive $\alpha$-stable L\'evy process $Z_\alpha(t)$ with $1<\alpha\leq 2$ .
Then
$$(X(t))^-\leq  (Z_\alpha(t)-ct)^-$$
so using (\ref{stable}) and (\ref{stable1})
one can easily check that 
the assumptions (\ref{addass}), (\ref{addass2}), (\ref{addass3}) and (\ref{addass31}) are satisfied.
Under mild conditions on the distribution of the compound Poisson process we can check the assumption (\ref{addass4}).
\end{example}

In some cases the supremum distribution can be identified by using just the strong Markov property.
Indeed, let us consider the spectrally negative $\alpha$-stable L\'evy process $Z_\alpha$ with $1<\alpha\leq 2$ without any drift
(that is the skewness parameter $\beta=-1$ and the shift parameter $\mu=0$).  Thus, let $\tau=\inf\{t>0: Z_\alpha(t)>u\}$ where $u\geq 0$ then $\{\sup_{t\leq T} Z_\alpha (t)>u\}=\{\tau<T\}$ a.s. Since the process $Z_\alpha$ is spectrally negative
(it has no positive jumps), we have $Z_\alpha(\tau)=u$. By the strong Markov property $Z^*_\alpha(t)=Z_\alpha(t+\tau)-Z(\tau)$ is a L\'evy process with the same distribution as $Z_\alpha$
and independent of $\tau$. We know that $\Prob(Z_\alpha(s)>0)=1/\alpha$. Thus, for $u\geq 0$ we have:
\begin{eqnarray*}
\lefteqn{\Prob(\sup_{t\leq T} Z_\alpha (t)>u)}\\
&=&\Prob(\sup_{t\leq T} Z_\alpha (t)>u,\, Z_\alpha(T)>u)+
\Prob(\sup_{t\leq T} Z_\alpha (t)>u,\, Z_\alpha(T)\leq u)\\
&=&\Prob(Z_\alpha(T)>u)+\Prob(\sup_{t\leq T} Z_\alpha (t)>u,\, Z^*_\alpha(T-\tau)\leq 0)\\
&=&\Prob(Z_\alpha(T)>u)+\Prob(\tau<T,\, Z^*_\alpha(T-\tau)\leq 0)\\
&=&\Prob(Z_\alpha(T)>u)+\int_0^T\Prob(Z^*_\alpha(T-s)\leq 0)\,{\rm d_s}\Prob(\tau<s)\\
&=&\Prob(Z_\alpha(T)>u)+\left(1-\frac{1}{\alpha}\right)\Prob(\tau<T)\\
&=&\Prob(Z_\alpha(T)>u)+\left(1-\frac{1}{\alpha}\right)\Prob(\sup_{t\leq T} Z_\alpha (t)>u),
\end{eqnarray*}
which gives
$$
\Prob(\sup_{t\leq T} Z_\alpha (t)>u)=\alpha\Prob(Z_\alpha (T)>u)\,;
$$
compare the last formula with the result of Michna \cite{mi:13}.

In this paper we studied the distribution of suprema for L\'{e}vy processes with jumps of single sign.
The general case of L\'evy processes with jumps of either sign
(for example, symmetric L\'evy processes) is much more complicated (see, for example, Kwa\'snicki et al.~\cite{kw:ma:ry:13}).

\section*{Acknowledgements}
This work is partially supported by the Ministry of Science and
Higher Education of Poland under the grant DEC-2013/09/B/ST1/01778
(2014-2016). The second author also kindly acknowledges partial support by the project RARE -318984, a Marie Curie IRSES Fellowship within the 7th European
Community Framework Programme.\\ 

\bibliographystyle{plainnat}

\end{document}